\newtheorem{theorem}{Theorem}[section]
\newtheorem{lemma}[theorem]{Lemma}
\def\irr#1{{\rm  Irr}(#1)}
\newcommand{\QQ}{{\mathbb Q}}
\begin{document}

\title[Solvable groups]{Landau's theorem, fields of values for characters, and solvable groups}
\author[Mark L. Lewis]{Mark L. Lewis}

\address{Department of Mathematical Sciences, Kent State University, Kent, OH 44242}
\email{lewis@math.kent.edu}

\subjclass[2010]{ 20C15. Secondary: 20D10, 20E45}
\keywords{Landau's theorem, solvable groups, irreducible characters, fields of value }

\begin{abstract}
When $G$ is solvable group, we prove that the number of conjugacy classes of elements of prime power order is less than or equal to the number of irreducible characters with values in fields where $\QQ$ is extended by prime power roots of unity.  We then combine this result with a theorem of H\'ethelyi and K\"ulshammer that bounds the order of a finite group in terms of the number of conjugacy classes of elements of prime power order to bound the order of a solvable group by the number of irreducible characters with values in fields extended by prime power roots of unity.  This yields for solvable groups a generalization of Landau's theorem.
\end{abstract}

\maketitle

\section{Introduction}

In this note, all groups are finite.  The famous theorem of Landau states that the order of a group $G$ can be bounded in terms of a function of the number of its conjugacy classes.  In Theorem 1.1 of \cite{HeKu}, H\'ethelyi and K\"ulshammer showed that in fact the order of the group is bounded in terms of the number of its conjugacy classes of elements of prime power order.

It is well known that there is a duality between conjugacy classes and ordinary characters.  In particular, the number of conjugacy classes equals the number of irreducible characters, so Landau's theorem could equivalently be stated as saying that if $G$ is a group, then $|G|$ is bounded by a function in terms of $|\irr G|$.  Thus, it makes sense to ask if there is a character theoretic version of the H\'ethelyi - K\"ulshammer theorem.  We will show that there is for solvable groups.

If $p$ is a prime, we define $\QQ_p$ to be the field obtained by adjoining all $p^a$th roots of unity for all positive integers $a$ to $\QQ$.  We say that a character is $\QQ_{pp}$-valued if there is a prime $p$ so that the values of the character all lie in $\QQ_p$.  With this definition, we prove the following.

\begin{theorem}\label{main a}
For every positive integer $k$, there exist a finite number of solvable groups with exactly $k$ $\QQ_{pp}$-valued irreducible characters.
\end{theorem}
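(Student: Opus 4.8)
The plan is to deduce Theorem~\ref{main a} from the theorem of H\'ethelyi and K\"ulshammer together with the following inequality: for every solvable group $G$, the number $\cn{G}$ of conjugacy classes of elements of prime power order is at most the number of $\QQ_{pp}$-valued irreducible characters of $G$. Granting this, the theorem is immediate: if a solvable group $G$ has exactly $k$ such characters, then $\cn{G}\le k$, so by Theorem~1.1 of \cite{HeKu} there is a function $f$, independent of $G$, with $|G|\le f(\cn{G})\le f(k)$; since only finitely many groups have a prescribed order, only finitely many solvable groups have exactly $k$ $\QQ_{pp}$-valued irreducible characters.

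To prove the inequality I would work one prime at a time, using Isaacs' theory of $B_\pi$-characters of $\pi$-separable groups with $\pi=\{p\}$, which applies since a solvable group is $p$-solvable for every prime $p$. I would invoke three standard facts: the number $|{\rm I}_{\{p\}}(G)|$ of irreducible $p$-partial characters of $G$ equals the number of conjugacy classes of $p$-elements of $G$; restriction to $p$-elements is a bijection from the set ${\rm B}_{\{p\}}(G)$ of $B_{\{p\}}$-characters of $G$ onto ${\rm I}_{\{p\}}(G)$; and ${\rm B}_{\{p\}}(G)$ is invariant under Galois conjugation. Since on a $p$-element every ordinary character of $G$ takes a value lying in $\QQ_p$, each member of ${\rm I}_{\{p\}}(G)$ is $\QQ_p$-valued, hence fixed by the Galois group of $\QQ_p$ over $\QQ$; via the bijection and the Galois invariance of ${\rm B}_{\{p\}}(G)$, the corresponding $B_{\{p\}}$-lift is fixed as well. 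Thus every character in ${\rm B}_{\{p\}}(G)$ is $\QQ_p$-valued, and in particular $\QQ_{pp}$-valued.

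The step I expect to be the main obstacle is to show that for distinct primes $p$ and $q$ the sets ${\rm B}_{\{p\}}(G)$ and ${\rm B}_{\{q\}}(G)$ have only the trivial character in common. I would argue by induction on $|G|$, using three further features of the theory: the irreducible constituents of the restriction of a $B_\pi$-character to a normal subgroup are again $B_\pi$-characters; for a normal subgroup $N$, inflation gives a bijection from ${\rm B}_{\{p\}}(G/N)$ onto the set of $B_{\{p\}}$-characters of $G$ with $N$ in the kernel; and ${\rm B}_{\{p\}}(H)=\{1_H\}$ whenever $H$ is an $r$-group with $r\ne p$. Take a minimal normal subgroup $N$ of $G$, necessarily an elementary abelian $r$-group, and choose $s\in\{p,q\}$ with $s\ne r$. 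If $\chi\in{\rm B}_{\{p\}}(G)\cap{\rm B}_{\{q\}}(G)$, then, since $\chi\in{\rm B}_{\{s\}}(G)$, the constituents of the restriction of $\chi$ to $N$ lie in ${\rm B}_{\{s\}}(N)=\{1_N\}$; hence $N\le\ker{\chi}$, so $\chi$ is inflated from a character in ${\rm B}_{\{p\}}(G/N)\cap{\rm B}_{\{q\}}(G/N)$, and the inductive hypothesis gives $\chi=1_G$. (The base case $|G|=1$ is trivial; for abelian $G$ the claim is also clear, since there ${\rm B}_{\{p\}}(G)$ is the set of linear characters of $G$ of $p$-power order.)

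Finally I would put the pieces together. Because the identity is a $p$-element, $|{\rm B}_{\{p\}}(G)|=|{\rm I}_{\{p\}}(G)|$ equals $1$ plus the number of conjugacy classes of nontrivial $p$-elements of $G$. Summing over the primes $p$ dividing $|G|$ and using the disjointness just established, $\bigl|\bigcup_p {\rm B}_{\{p\}}(G)\bigr| = 1 + \sum_p\bigl(|{\rm B}_{\{p\}}(G)|-1\bigr) = \cn{G}$. By the second paragraph $\bigcup_p {\rm B}_{\{p\}}(G)$ is a set of $\QQ_{pp}$-valued irreducible characters of $G$, so $\cn{G}$ is at most the number of such characters. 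This is the required inequality, and Theorem~\ref{main a} follows as explained in the first paragraph.
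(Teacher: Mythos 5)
Your proposal is correct and follows essentially the same route as the paper: it establishes the inequality between the number of prime power conjugacy classes of a solvable $G$ and the number of its $\QQ_{pp}$-valued irreducible characters by means of Isaacs' ${\rm B}_p$-characters (the class-counting bijection, the $\QQ_p$-valuedness of ${\rm B}_p$-characters, and the pairwise disjointness of the sets ${\rm B}_p(G)$ away from $1_G$), and then invokes the H\'ethelyi--K\"ulshammer theorem. The only cosmetic differences are that you rederive the $\QQ_p$-valuedness of ${\rm B}_p$-characters via a Galois-invariance argument where the paper simply cites Corollary 12.1 of \cite{pisep}, and that you prove the disjointness lemma by inducting through $G/N$ for a minimal normal subgroup $N$ rather than restricting to a maximal normal subgroup as the paper does.
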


In fact, we will show that Theorem \ref{main a} is a consequence of the Theorem of H\'ethelyi and K\"ulshammer because of the following fact.

\begin{theorem}\label{main b}
If $G$ is a solvable group, then the number of prime power conjugacy classes in $G$ is less than or equal to the number of $\QQ_{pp}$-valued irreducible characters of $G$.
\end{theorem}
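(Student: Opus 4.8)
The plan is to deduce this from Isaacs's theory of $\Bpi{}$-characters of $\pi$-separable groups, applied with $\pi$ a single prime, together with an inclusion--exclusion over the primes dividing $|G|$. Solvability will be used only to guarantee that $G$ is $\pi$-separable for every set of primes $\pi$. For a prime $p$ let $k_p(G)$ denote the number of conjugacy classes of $p$-elements of $G$ (so the identity contributes the trivial class to every $k_p(G)$), and note that the conjugacy classes of elements of prime power order are exactly the trivial class together with, for each $p$, the nontrivial classes of $p$-elements; the intersection of the "$p$-class" set and the "$q$-class" set for $p\neq q$ is just the trivial class.

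First I would fix $\pi=\{p\}$ and invoke Isaacs: $\card{\Ipi{G}}$ equals the number of conjugacy classes of $p$-elements of $G$, and the canonical lifting satisfies $\card{\Bpi{G}}=\card{\Ipi{G}}$; moreover $1_G\in\Bpi{G}$. Hence $\card{\Bpi{G}}=k_p(G)$, and there is a bijection from the set of conjugacy classes of $p$-elements onto $\Bpi{G}$ carrying the trivial class to $1_G$. Next I would control the fields of values: every $\chi\in\Bpi{G}$ is $\QQ_{pp}$-valued, indeed has all its values in $\QQ_p$. This is because a $\Bpi{}$-character is induced, $\chi=\gamma^G$, from a $\pi$-special character $\gamma$ of a subgroup $W$ (a nucleus of $\chi$); a $\{p\}$-special character has all of its values in a field $\QQ(\zeta_{p^a})\subseteq\QQ_p$, and induction cannot enlarge the field of values (each value of $\gamma^G$ is a nonnegative rational combination of values of $\gamma$, so lies in $\QQ(\gamma)$), whence the values of $\chi=\gamma^G$ lie in $\QQ_p$. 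Thus $\Bpi{G}$ is a set of $\QQ_{pp}$-valued irreducible characters of cardinality $k_p(G)$.

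Finally I would patch these together across primes. For distinct primes $p$ and $q$ I claim $\mathrm B_{\{p\}}(G)\cap\mathrm B_{\{q\}}(G)=\{1_G\}$; this is the case $\sigma\cap\tau=\emptyset$ of the compatibility $\mathrm B_\sigma(G)\cap\mathrm B_\tau(G)=\mathrm B_{\sigma\cap\tau}(G)$ of the $\mathrm B$-construction under intersecting the set of primes, together with $\mathrm B_\emptyset(G)=\{1_G\}$. (Alternatively, by the previous paragraph a \emph{non-rational} $\mathrm B_{\{p\}}$-character cannot be a $\mathrm B_{\{q\}}$-character, since $\QQ_p\cap\QQ_q=\QQ$, so only rational $\mathrm B$-characters need separate attention.) Now define a map from the conjugacy classes of elements of prime power order to $\irr G$: send the trivial class to $1_G$ and, for each $p$, send the nontrivial classes of $p$-elements injectively into $\mathrm B_{\{p\}}(G)\setminus\{1_G\}$ using the bijection above. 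By the field-of-values step every image is $\QQ_{pp}$-valued; the map is injective on the classes attached to a single prime by construction, and classes attached to different primes have distinct images, all distinct from $1_G$, by the disjointness claim. This gives an injection from the prime power conjugacy classes of $G$ into the $\QQ_{pp}$-valued irreducible characters, which is Theorem~\ref{main b}. (Equivalently: by the disjointness, $\bigcup_p \mathrm B_{\{p\}}(G)$ has $\sum_p k_p(G)-(r-1)$ elements, where $r$ is the number of primes dividing $|G|$ — exactly the number of prime power classes — and it is contained in the set of $\QQ_{pp}$-valued irreducible characters.)

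The routine part is the combinatorial patching; the real work, and the main obstacle, is in the two facts drawn from Isaacs's $\pi$-separable character theory: that a $\Bpi{}$-character is induced from a $\pi$-special character of a nucleus and that $\pi$-special characters have values in the $\pi$-cyclotomic field (for the field-of-values step), and that the $\mathrm B_\pi$-sets for disjoint $\pi$ meet only in $1_G$ (for the disjointness step). I would either cite these directly or, if needed, reprove the disjointness using the non-rational shortcut above plus a nucleus-degree argument handling the rational $\mathrm B$-characters.
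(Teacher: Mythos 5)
Your proposal is correct and follows essentially the same route as the paper: count the classes of $p$-elements by $|{\rm B}_p(G)|$, observe that ${\rm B}_p(G)$-characters are $\QQ_p$-valued (the paper cites Corollary 12.1 of Isaacs rather than re-deriving it from the nucleus construction), establish that the ${\rm B}_p(G)$ for distinct primes meet only in $1_G$ (the paper's Lemma~\ref{empty}, proved by a short induction using Corollaries 7.5 and 5.3 of Isaacs rather than the intersection identity you invoke), and patch across primes. The only caveat is that your primary citation for disjointness, ${\rm B}_\sigma(G)\cap{\rm B}_\tau(G)={\rm B}_{\sigma\cap\tau}(G)$, is not a standard established result, so you would in practice need the direct inductive argument the paper gives.
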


Following up on the parallelism between conjugacy classes and irreducible characters, it is tempting to believe that the number of prime power conjugacy classes should equal the number of $\QQ_{pp}$ valued irreducible characters of $G$, but we will present a number of examples with these two sets do not have equal sizes.  However, if we restrict ourselves to groups of odd order, then we do obtain equality.

\begin{theorem}\label{main odd}
If $|G|$ is odd, then the number of prime power conjugacy classes in $G$ equals the number of $\QQ_{pp}$-valued irreducible characters of $G$.
\end{theorem}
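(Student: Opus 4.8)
The key is that Theorem \ref{main b} already gives one inequality, so for odd-order groups I only need the reverse inequality: the number of $\QQ_{pp}$-valued irreducible characters is at most the number of prime power conjugacy classes. Since everything in sight is counted by a sum over primes, the natural plan is to localize at a single prime $p$ and set up a bijection (or at least an injection in the right direction) between $\QQ_p$-valued irreducible characters and $p$-power-order conjugacy classes, then assemble the pieces. The first thing I would do is recall the standard Galois-action dictionary: if $\sigma_n$ denotes the Galois automorphism of $\QQ(\zeta_{|G|})$ fixing $p$-power roots of unity and raising $p'$-roots of unity to a suitable power, then $\chi$ is $\QQ_p$-valued precisely when $\chi$ is fixed by the subgroup $H_p$ of the Galois group generated by all such $\sigma_n$, and dually a conjugacy class $g^G$ has $g$ of $p$-power order precisely when $g^G$ is fixed by $H_p$ acting on classes (this is the classical fact that the Galois orbit of $g^G$ consists of the classes of the $p$-power-th powers of $g$, which collapse to a single class iff $g$ is a $p$-element). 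So the problem reduces to: for $G$ of odd order, the number of $H_p$-fixed irreducible characters equals the number of $H_p$-fixed conjugacy classes.

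That last statement is a fixed-point-counting problem for a group action on the character table, so the tool I expect to use is Brauer's permutation lemma together with its refinements: the number of orbits of a cyclic (or abelian) group acting compatibly on $\irr G$ and on the classes is the same, and more to the point, the \emph{number of fixed points} of a single permutation on $\irr G$ equals the number of fixed points on classes when the two actions come from a single Galois automorphism. I would invoke the version (due essentially to Brauer, and sharpened in work connected to the Glauberman correspondence and to results of Navarro and others on Galois actions) that says: for $\sigma$ a Galois automorphism, the number of $\sigma$-fixed irreducible characters of $G$ equals the number of $\sigma$-fixed classes. The subtlety is that I need this simultaneously for the whole subgroup $H_p$, not just one element; here the oddness of $|G|$ should be what lets me conclude that the $H_p$-action on $\irr G$ and on classes are permutation-isomorphic, not merely orbit-equivalent—this is exactly the phenomenon behind the fact that real characters correspond to real classes only when extra hypotheses (like odd order, forcing the only relevant involution to act trivially in the right way) are present. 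Concretely I would try to show $H_p$ acts on $\irr G$ with the same permutation character as on classes, perhaps by reducing to cyclic subgroups via an inclusion–exclusion over the lattice of subgroups of the abelian group $H_p$ and applying the one-automorphism fixed-point equality at each stage.

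The main obstacle, and the place where solvability plus odd order must really be used, is precisely upgrading ``same number of orbits'' (Brauer) to ``same number of fixed points for every subgroup'' (permutation isomorphism). For general groups this fails—indeed the examples promised in the paragraph before the theorem show the two counts differ—so the proof cannot be purely formal. I anticipate needing either (i) an induction on $|G|$ using solvability to pass to a normal subgroup $N$ and the quotient $G/N$, matching up $\QQ_p$-valued characters and $p$-classes through characters lying over a fixed $\sigma$-invariant character of $N$ (Clifford theory, with the Glauberman–Isaacs correspondence handling the stabilizer bookkeeping and the odd order guaranteeing the correspondence is canonical and sign-free), or (ii) a direct appeal to a theorem in the literature stating that for odd-order groups the number of $\sigma$-invariant irreducible characters equals the number of $\sigma$-invariant classes for \emph{any} Galois automorphism $\sigma$, applied with $\sigma$ ranging over generators of $H_p$ and combined via the subgroup lattice. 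Either way, the hard technical core is controlling how the count behaves under the Clifford-theoretic reduction, where ramification and the action of $\sigma$ on a projective representation of a stabilizer could a priori break the equality—and it is the hypothesis $2 \nmid |G|$ that kills the obstruction.
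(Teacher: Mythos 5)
Your plan is an outline whose hard core is missing, and the specific mechanism you propose for filling it cannot work. The central claim---that for odd-order $G$ the number of $H_p$-fixed irreducible characters equals the number of $H_p$-fixed conjugacy classes---is exactly what you leave unproved: you offer two alternative strategies and carry out neither. Moreover, the reduction you sketch, ``inclusion--exclusion over the lattice of subgroups of $H_p$ applying the one-automorphism fixed-point equality at each stage,'' is provably hopeless: Brauer's permutation lemma for each single $\sigma\in H_p$ only determines the permutation \emph{characters} of the two $H_p$-actions, and the number of points fixed by the whole abelian group is not a function of the permutation character (for $A=C_2\times C_2$, a union of two fixed points with a regular orbit has the same permutation character as the union of the three transitive sets of size $2$, yet $2$ versus $0$ $A$-fixed points). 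Separately, your ``dictionary'' asserting that $g^G$ is $H_p$-fixed exactly when $g$ is a $p$-element is false in general---any rational group with an element of composite order is a counterexample, and the paper's examples before the theorem are of precisely this type. It \emph{is} true for odd $|G|$, but only because odd-order groups have no nontrivial real elements: if $g_pg_{p'}$ is conjugate to $g_pg_{p'}^{-1}$ then $g_{p'}$ is real, hence trivial. You never invoke this, and it is the one place the oddness hypothesis actually enters. Finally, assembling over primes requires that a nonprincipal irreducible character cannot be $\QQ_p$- and $\QQ_q$-valued for distinct $p,q$ (it would be rational-valued, impossible for nonprincipal characters of odd-order groups), which you also omit.

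The irony is that once the real-element fact is in hand, the cyclic-versus-abelian difficulty you agonize over simply disappears: the \emph{single} automorphism $\tau$ fixing $p$-power roots of unity and inverting $p'$-roots already has exactly the $p$-element classes as its fixed classes, so Brauer's lemma for $\tau$ alone closes the chain
$\#\{p\text{-classes}\} \le \#\{\QQ_p\text{-valued } \chi\} \le \#\{\tau\text{-fixed }\chi\} = \#\{\tau\text{-fixed classes}\} = \#\{p\text{-classes}\}$,
the first inequality being Theorem \ref{main c}. This is essentially what the paper does, except that it routes the argument through Isaacs' ${\rm B}_p$-characters: by Lemma 3.1 of \cite{odd}, for odd $|G|$ the $\tau$-fixed (equivalently, $\QQ_p$-valued) irreducible characters are exactly ${\rm B}_p(G)$, and $|{\rm B}_p(G)|$ equals the number of $p$-classes by Theorem 9.3 of \cite{pisep}; Lemma \ref{empty} then supplies the disjointness needed to sum over primes. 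No Clifford theory, Glauberman--Isaacs correspondence, or subgroup-lattice analysis is needed.
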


We note that the Theorem of H\'ethelyi and K\"ulshammer is proved for all finite groups, and unfortunately, our results are only for solvable groups.  Unfortunately, the key to our proof relies on facts that are true only for solvable groups.  However, we believe that one should be able to remove the solvability hypothesis from Theorem \ref{main a}, but different tools will need to be developed to do this.  On the other hand, it is tempting to ask whether or not Theorem \ref{main b} is true if the solvability hypothesis is removed.  We know of examples of nonsolvable groups where the number of $\QQ_{pp}$-valued irreducible characters is greater than the number of prime power conjugacy classes, namely any symmetric group of degree $n \ge 5$, and we do not know of any nonsolvable group where the number of $\QQ_{pp}$-valued irreducible characters is less than the number of prime power conjugacy classes, but we have not checked very thoroughly.

We will prove Theorem \ref{main b} one prime at a time.  In fact, we can weaken the hypothesis that $G$ is solvable to $G$ is $p$-solvable for an arbitrary prime $p$.

\begin{theorem} \label{main c}
Let $p$ be a prime and let $G$ be a $p$-solvable group.  Then the number of conjugacy classes of $p$-elements is less than or equal to the number of $\QQ_p$-valued irreducible characters.
\end{theorem}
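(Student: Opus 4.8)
The plan is to prove Theorem~\ref{main c} by means of Isaacs' theory of $\pi$-partial characters with the single set $\pi=\{p\}$, which is available because a $p$-solvable group is $\{p\}$-separable. The $\{p\}$-elements of $G$ are exactly its $p$-elements, and Isaacs' set $\Ipi{G}$ of irreducible $\{p\}$-partial characters is a basis for the space of class functions defined on the $p$-elements of $G$, so $|\Ipi{G}|$ equals the number of conjugacy classes of $p$-elements of $G$. Restriction to the $p$-elements of $G$ furnishes a bijection of $\Bpi{G}\subseteq\irr{G}$ onto $\Ipi{G}$, so the number of conjugacy classes of $p$-elements of $G$ equals $|\Bpi{G}|$. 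Hence it suffices to prove that every character in $\Bpi{G}$ is $\QQ_p$-valued: then $|\Bpi{G}|$ is at most the number of $\QQ_p$-valued irreducible characters of $G$, which is the inequality we want.

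To do this I would fix $\chi\in\Bpi{G}$ and write $\chi^0$ for its restriction to the set of $p$-elements of $G$; the inclusion $\QQ(\chi^0)\subseteq\QQ(\chi)$ is automatic. For the reverse inclusion, use that $\Bpi{G}$ and $\Ipi{G}$ are each stable under the natural action of ${\rm Gal}(\QQ(\zeta_{|G|})/\QQ)$ (the conditions defining these sets being Galois invariant) and that passing to $\chi^0$ commutes with the Galois action, so that $\chi\mapsto\chi^0$ is a Galois-equivariant bijection of $\Bpi{G}$ onto $\Ipi{G}$. Consequently, whenever a Galois automorphism $\sigma$ fixes $\chi^0$ we get $\chi^\sigma\in\Bpi{G}$ and $(\chi^\sigma)^0=(\chi^0)^\sigma=\chi^0$, so $\chi^\sigma$ and $\chi$ map to the same element under the bijection, forcing $\chi^\sigma=\chi$. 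This yields $\QQ(\chi)\subseteq\QQ(\chi^0)$, hence $\QQ(\chi)=\QQ(\chi^0)$. Since $\chi^0$ is a function on $p$-elements, each of its values is a sum of $p$-power roots of unity and so lies in $\QQ_p$; thus $\chi$ is $\QQ_p$-valued, which gives Theorem~\ref{main c}.

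The one substantive point is the assertion that $\chi\mapsto\chi^0$ is Galois equivariant on $\Bpi{G}$, equivalently that every character in $\Bpi{G}$ has values in $\QQ(\zeta_{p^a})$, where $p^a$ is the order of a Sylow $p$-subgroup of $G$, and so is $\QQ_p$-valued. I expect this to come out of the canonical, inductive construction of the characters in $\Bpi{G}$: one shows that each $\chi\in\Bpi{G}$ is induced from a $\{p\}$-special character $\gamma$ of a nucleus $W\le G$, so that $\QQ(\chi)\subseteq\QQ(\gamma)$, and then proves by induction on $|G|$ --- passing to $G/K$ where $K$ is a minimal normal $p'$-subgroup of $G$, which is contained in the kernel of any $\{p\}$-special character, and otherwise replacing $G$ by the stabilizer of a character of $O_p(G)$ lying under $\gamma$ --- that a $\{p\}$-special character takes values in $\QQ(\zeta_{p^a})$. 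Establishing this field-of-values statement is where the real work lies; everything else is bookkeeping with Isaacs' $\pi$-machinery, which is exactly the part that uses the $p$-solvability of $G$. The same package also delivers Theorem~\ref{main b}, by assembling the bijections above for the various primes dividing $|G|$.
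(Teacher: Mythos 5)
Your proposal is correct and follows essentially the same route as the paper: the paper counts conjugacy classes of $p$-elements by $|\Bpi{G}|$ using Theorem 9.3 of \cite{pisep} and then quotes Corollary 12.1 of \cite{pisep} for the fact that every character in $\Bpi{G}$ is $\QQ_p$-valued. The only difference is that you re-derive these two citations (the $\Ipi{G}$ counting argument and the Galois-equivariance argument for the field of values) instead of citing them, and that reconstruction is sound and consistent with Isaacs' own proofs.
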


It is not possible to prove Theorem \ref{main a} one prime at a time.  In particular, we will present examples of solvable groups $|G|$ with two or three irreducible characters having values in $\QQ_p$, but where $|G|_p$ is arbitrarily large.  However, we will show that one can bound the number of chief factors that are $p$-groups for a $p$-solvable group is bounded by the number of $\QQ_p$-valued irreducible characters.

\begin{theorem} \label{main d}
If $G$ is a $p$-solvable group for a prime $p$, then the number of $p$-chief factors in a chief series for $G$ is at most the number of nonprincipal $\QQ_p$-valued irreducible characters of $G$.  Furthermore, if $G$ is a solvable group, then the number of factors in a chief series for $G$ is less than or equal to the number of nonprincipal irreducible $\QQ_{pp}$-valued characters.
\end{theorem}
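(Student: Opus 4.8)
The plan is to prove both assertions simultaneously, by induction on $|G|$, isolating the one non-formal ingredient as follows.

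\smallskip
\textbf{Key Claim.} \emph{If $G$ is $p$-solvable and $N$ is a minimal normal subgroup of $G$ that is a $p$-group, then $G$ has a nonprincipal $\QQ_p$-valued irreducible character $\chi$ with $N\not\subseteq\ker\chi$.}
\smallskip

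Granting the Key Claim, let $N$ be a minimal normal subgroup of the $p$-solvable group $G$. If $N$ is a $p'$-group (in particular if $N$ is nonabelian), then a chief series of $G$ through $N$ has the same $p$-chief factors as the induced chief series of $G/N$, and every $\QQ_p$-valued irreducible character of $G/N$ inflates to one of $G$, so induction finishes this case. If $N$ is a $p$-group, then a chief series of $G$ has exactly one more $p$-chief factor than the induced series of $G/N$, so it suffices to exhibit one nonprincipal $\QQ_p$-valued irreducible character of $G$ beyond those inflated from $G/N$; adjoining the character $\chi$ of the Key Claim does this, since $N\not\subseteq\ker\chi$ means $\chi$ is not such an inflation. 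The same induction proves the second assertion: if $G$ is solvable and $N$ is a minimal normal (hence elementary abelian $q$-) subgroup, a chief series of $G$ has one more factor than that of $G/N$, and one adjoins to the inflations from $G/N$ the character produced by the Key Claim for the prime $q$, which is $\QQ_q$-valued, hence $\QQ_{pp}$-valued, and nonprincipal.

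To prove the Key Claim I would first reduce to a central situation. Since $N$ is minimal normal, any irreducible character of $G$ lying over a nonprincipal $\lambda\in{\rm Irr}(N)$ has kernel meeting $N$ trivially, hence not containing $N$; so it is enough to find a $\QQ_p$-valued irreducible character of $G$ over some nonprincipal $\lambda\in{\rm Irr}(N)$. Fix such a $\lambda$. As $N$ is a $p$-group, $\lambda$ has values in $p$-power roots of unity, so every Galois automorphism over $\QQ_p$ fixes $\lambda$ and therefore preserves ${\rm Irr}(T\mid\lambda)$, where $T:=I_G(\lambda)$; since the Clifford correspondence ${\rm Irr}(T\mid\lambda)\to{\rm Irr}(G\mid\lambda)$, $\psi\mapsto\psi^{G}$, commutes with Galois automorphisms, it suffices to find a $\QQ_p$-valued $\psi\in{\rm Irr}(T\mid\lambda)$, where now $\lambda$ is $T$-invariant. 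Replacing $T$ by $T/\ker\lambda$ and $\lambda$ by the faithful character it induces on $N/\ker\lambda$ — legitimate since inflation preserves fields of values — we may assume $\lambda$ is faithful and $T$-invariant on $N$; then $N$ is cyclic of order $p$ and is centralized by $T$, so $N$ is a central subgroup of order $p$. Thus everything comes down to the following: \emph{if $G$ is $p$-solvable, $z\in\gpcen{G}$ has order $p$, and $\lambda$ is a faithful character of $\langle z\rangle$, then $G$ has a $\QQ_p$-valued irreducible character $\chi$ with $z\notin\ker\chi$.}

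This central case is the main obstacle, and it is where $p$-solvability is genuinely used. The plan is to invoke the theory of $\pi$-special characters with $\pi=\{p\}$: the faithful linear character $\lambda$ of the central $\langle z\rangle$ is $p$-special and $G$-invariant, so $G$ has a $p$-special irreducible character $\chi$ lying over $\lambda$; a $p$-special character of a $p$-solvable group is $p$-rational, so its values lie in $\QQ(\zeta_{|G|_p})\subseteq\QQ_p$; and since $\chi$ lies over the faithful $\lambda$ we have $z\notin\ker\chi$, giving the required character. Absent a direct citation, the existence of this $p$-special $\chi$ would be obtained by building the character up along a chief series of $G$ over $\langle z\rangle$: extending canonically across $p'$-chief factors (using that an invariant character of coprime determinantal order extends) and passing to $p$-special correspondents across $p$-chief factors, i.e.\ re-deriving the relevant portion of Isaacs' $\pi$-theory. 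I expect this central case to be the only real work; the rest is bookkeeping with Clifford theory and inflation.
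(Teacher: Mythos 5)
Your overall induction is sound and matches the paper's: the whole theorem reduces to showing that when $N$ is a minimal normal $p$-subgroup of the $p$-solvable group $G$, some nonprincipal $\QQ_p$-valued $\chi \in \irr G$ has $N \not\le \ker\chi$ (this is the paper's Theorem \ref{main}), and your Clifford-theoretic reduction of that claim to the case of a central subgroup $\langle z \rangle$ of order $p$ is correct (induction commutes with the Galois action over $\QQ_p \cap \QQ(\zeta_{|G|})$, and that action fixes $\lambda$ because its values are $p$-th roots of unity). Your bookkeeping for the solvable case is, if anything, cleaner than the paper's: by adjoining one new non-inflated character per chief factor you avoid any double-counting of characters that happen to be $\QQ_p$-valued for more than one prime $p$, an issue the paper's ``sum over all primes'' step quietly finesses via the disjointness of the sets ${\rm B}_p(G)$.

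The gap is in the central case, which you correctly identify as the crux but do not actually close. Your proposed construction --- climb a chief series above $\langle z\rangle$, extend canonically across $p'$-chief factors, and take $p$-special constituents across $p$-chief factors --- breaks down after the first $p$-chief factor: the $p$-special constituent you select there need not be invariant under the rest of the group, so at the next $p'$-chief factor there is no canonical extension, and inducing from the stabilizer multiplies the degree by a nontrivial $p'$-number, destroying $p$-speciality. (Gajendragadkar's going-up results are genuinely asymmetric: across a $\pi$-quotient all constituents of $\theta^G$ are $\pi$-special, but across a $\pi'$-quotient one needs $\theta$ to be invariant.) The assertion that an invariant $p$-special character of a normal subgroup always lies under a $p$-special character of $G$ is therefore not established by your sketch, and it is strictly stronger than what is needed. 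This non-invariance obstruction is exactly what Isaacs' ${\rm B}_\pi$-machinery was built to absorb: the paper instead notes that $\lambda \in {\rm B}_p(N)$, invokes the going-up lemma for ${\rm B}_p$-characters (Lemma \ref{key}, resting on Theorems 6.2(b) and 7.1 of \cite{pisep}), which tolerates induction from stabilizers at $p'$-steps precisely because ${\rm B}_p$-characters are allowed to have $p'$-parts in their degrees, and then uses Corollary 12.1 of \cite{pisep} to conclude that ${\rm B}_p$-characters are $\QQ_p$-valued. If you want to salvage your route, replace ``$p$-special'' by ``${\rm B}_p$'' throughout the central case (or cite a going-over-invariant-characters result for $\pi$-special characters with an actual proof); as written, the key existence statement is asserted rather than proved.
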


Note that the nonprincipal $\QQ_p$-valued and $\QQ_{pp}$-valued irreducible characters of $G$ can be read off of the character table for $G$.  We will see that the results of this paper are essentially a direct application of the ${\rm B}_\pi$ characters developed by Professor Isaacs.

We would like to thank Nguyen Ngoc Hung for bringing this problem to my attention and for several helpful conversations while this paper was being written.

\section{One prime at a time}

We first work one prime at a time.  Let $p$ be a prime.  The following theorem is the key to this section.  We note that if we were able to remove the hypothesis that $G$ is $p$-solvable from this theorem, then Nguyen Hung and I would be able to remove the hypothesis that $G$ must be solvable from Theorem \ref{main a}.

\begin{theorem} \label{main}
Let $G$ be a $p$-solvable group and let $N$ be a minimal normal subgroup of $G$ that is a $p$-group. If $\lambda \in \irr N$, then $\lambda^G$ has an irreducible constituent $\chi$ such that $\chi$ is $\QQ_p$-valued.
\end{theorem}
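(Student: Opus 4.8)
The plan is to use the theory of $\pi$-special and $\mathrm{B}_\pi$ characters, with $\pi = \{p\}$. Since $N$ is a normal $p$-subgroup and $\lambda \in \irr N$ is linear, the key observation is that $\lambda$ is a $p$-special character of $N$ (indeed, every irreducible character of a $p$-group is $p$-special). Now I would invoke Isaacs's theory: in a $p$-solvable group, every $p$-special character of a normal subgroup extends in a controlled way, and the induced character $\lambda^G$ has among its constituents a $\mathrm{B}_p$-character $\chi$ lying over $\lambda$. The crucial property of $\mathrm{B}_p$-characters here is that they are $\QQ_p$-valued: a $\mathrm{B}_p$-character has values in the field generated by $p$-power roots of unity, which is exactly what it means to be $\QQ_p$-valued. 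So the whole argument reduces to producing a $\mathrm{B}_p$-constituent of $\lambda^G$ over $\lambda$ and quoting the field-of-values property.

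First I would recall the relevant facts from Isaacs's $\mathrm{B}_\pi$ theory (from \cite{Is} or the relevant Isaacs papers): for a $\pi$-separable group $G$, the set $\Bpi{G}$ is a canonical subset of $\irr G$, every irreducible character of $G$ lies over a unique $\Bpi{G}$ character under restriction to suitable subgroups, and $\Bpi{G}$ characters restrict to $\pi$-special characters on normal $\pi$-subgroups. Second, I would note that because $N$ is a minimal normal $p$-subgroup, $\lambda$ is linear and $G$-invariance of its stabilizer is not needed — I just need some constituent of $\lambda^G$ in $\Bpi{G}$. Third, using the characterization that $\Bpi{G}$ characters are precisely those lying over $\Bpi{H}$ characters for appropriate $H$, together with the fact that $\lambda \in \Bpi{N} = \irr N$ (every character of a $p$-group is in $\mathrm{B}_p$), I would conclude that $\lambda^G$ has a constituent $\chi \in \Bpi{G}$. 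Finally, I would invoke the theorem that every $\mathrm{B}_\pi$ character is $\QQ_\pi$-valued, i.e. has values in $\QQ$ adjoined $|G|_\pi$-th roots of unity — here $\pi = \{p\}$, so $\chi$ is $\QQ_p$-valued.

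The main obstacle, or rather the main point requiring care, is pinning down exactly which statement from the $\mathrm{B}_\pi$ literature guarantees that a given $\pi$-special (here $\mathrm{B}_\pi$) character of a normal subgroup has a $\mathrm{B}_\pi$ constituent in its induction to $G$. The cleanest route is: any $\chi \in \irr G$ lying over $\lambda$ is "almost" what we want, but I need the specific $\mathrm{B}_p$ one. I expect to cite the result (Isaacs) that $\Bpi{G}$ characters lying over a fixed $\pi$-special $\lambda \in \irr N$ exist whenever $\lambda$ is $\pi$-special, which follows from the "going up" theory of $\mathrm{B}_\pi$ characters through normal subgroups. The field-of-values statement — that $\mathrm{B}_\pi$ characters take values in $\QQ_\pi$ — is a known theorem (and is really the heart of why $\mathrm{B}_\pi$ characters are the right objects here), so once the constituent is produced, the conclusion is immediate. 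I would also remark that this is where $p$-solvability is essential, since $\mathrm{B}_\pi$ theory is only available for $\pi$-separable groups, which is exactly the limitation noted in the introduction.
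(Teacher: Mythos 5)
Your proposal is correct and follows essentially the same route as the paper: observe that $\lambda \in {\rm B}_p(N)$ (the paper uses Lemma 5.4 of Isaacs's $\pi$-separable paper, you use $p$-speciality of characters of $p$-groups, which amounts to the same thing), then produce a ${\rm B}_p(G)$ constituent of $\lambda^G$ via the going-up property through normal subgroups (the paper isolates and proves this as its Lemma \ref{key}), and finally quote the field-of-values theorem for ${\rm B}_\pi$ characters (Corollary 12.1 of \cite{pisep}). The only difference is that the paper actually supplies the short inductive proof of the going-up lemma rather than citing it, but it too labels that lemma as well known.
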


To prove this theorem, we will use the ${\rm B}_\pi$-characters defined by Isaacs in Definition 5.1 of \cite{pisep}.  Since the definition of ${\rm B}_\pi$-characters is somewhat complicated, we do not repeat it here, but refer the interested reader to \cite{pisep}.  Expository accounts of ${\rm B}_\pi$-characters can be found in \cite{arc} and \cite{aust}.  We let $\pi$ be a set of primes.  When $G$ is a $\pi$-separable group, we write ${\rm B}_{\pi} (G)$ for the set of ${\rm B}_\pi$ characters of $G$.  We note that these characters are only defined when the group is $\pi$-separable, and this is the main barrier we have in removing the solvable hypothesis from Theorems \ref{main a} and \ref{main b}.

The following fact is fairly well-known.

\begin{lemma} \label{key}
Let $\pi$ be a set of primes and let $G$ be a $\pi$-separable group.  If $N$ is normal in $G$ and $\theta \in {\rm B}_{\pi} (G)$, then there exists a character $\chi \in {\rm B}_{\pi} (G)$ such that $\chi$ is a constituent of $\theta^G$.
\end{lemma}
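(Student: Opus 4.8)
The plan is to deduce the lemma from the standard machinery of \cite{pisep}, working through the canonical bijection between $\Bpi{}$-characters and $\pi$-partial characters. (I read the statement as $\theta\in\Bpi{N}$, since the induced character $\theta^G$ is being formed.) The facts I would quote are: (i) for any $\pi$-separable group $H$, restriction to $\pi$-elements $\chi\mapsto\chi^0$ is a bijection $\Bpi{H}\to\Ipi{H}$; (ii) if $M\trianglelefteq H$ and $\chi\in\Bpi{H}$, then every irreducible constituent of $\chi_M$ lies in $\Bpi{M}$ (Clifford restriction for $\Bpi{}$-characters); and (iii) $\pi$-partial characters behave like Brauer characters under induction from a normal subgroup: if $\varphi$ is a $\pi$-partial character of $M\trianglelefteq H$, then $\varphi^H$ is again a $\pi$-partial character of $H$, hence a nonnegative integer combination of members of $\Ipi{H}$, and Frobenius reciprocity holds, so each $I_\pi$-constituent of $\varphi^H$ lies over $\varphi$.

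The argument then runs as follows. Put $\varphi=\theta^0\in\Ipi{N}$. Using (iii), pick $\psi\in\Ipi{G}$ that is a constituent of $\varphi^G$; then $\varphi$ is a constituent of $\psi_N$. Using (i), let $\chi\in\Bpi{G}$ be the unique $\Bpi{}$-character with $\chi^0=\psi$. I claim $\chi$ lies over $\theta$. Write $\chi_N=\sum_\eta a_\eta\,\eta$ over $\irr{N}$; by (ii), each $\eta$ with $a_\eta>0$ lies in $\Bpi{N}$. Restricting to $\pi$-elements gives $\psi_N=(\chi_N)^0=\sum_\eta a_\eta\,\eta^0$, and by (i) the $\eta^0$ appearing here are distinct members of $\Ipi{N}$, so this is exactly the decomposition of the $\pi$-partial character $\psi_N$ into irreducible $\pi$-partial characters. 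Since $\varphi=\theta^0$ is one of its constituents and $\eta\mapsto\eta^0$ is injective on $\Bpi{N}$, we get $a_\theta>0$. Thus $\theta$ is a constituent of $\chi_N$, and by Frobenius reciprocity $\chi$ is a constituent of $\theta^G$, which is what we want.

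The only real obstacle is locating facts (i)--(iii) in precisely the form used --- in particular that induction of a $\pi$-partial character from a normal subgroup stays a $\pi$-partial character, and that Clifford's theorem carries $\Bpi{}$-characters to $\Bpi{}$-characters on restriction --- but all of these are established in \cite{pisep} (and recounted in \cite{arc}, \cite{aust}); no new idea is required, which is why the lemma is ``fairly well known.'' An essentially equivalent alternative would be to first replace $G$ by the stabilizer $T=G_\theta$, invoke that the Clifford correspondence $\irr{T|\theta}\to\irr{G|\theta}$ matches $\Bpi{T}$-characters with $\Bpi{G}$-characters, and then use the known nonemptiness of $\irr{T|\theta}\cap\Bpi{T}$ for $G$-invariant $\theta$; I would mention this route but carry out the $\Ipi{}$-argument above as the shorter one.
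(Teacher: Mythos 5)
Your argument is correct (and you are right that the statement should read $\theta\in{\rm B}_\pi(N)$; that is a typo in the lemma as printed), but it takes a genuinely different route from the paper's. The paper never leaves the world of ordinary characters: it inducts on $|G:N|$ via a maximal normal subgroup $M$ with $N\le M<G$, producing $\mu\in{\rm B}_\pi(M)$ under $\theta^M$ by the inductive hypothesis, and then handles the two possible quotients separately --- Theorem 7.1 of \cite{pisep} when $G/M$ is a $\pi$-group (every irreducible constituent of $\mu^G$ is a ${\rm B}_\pi$-character) and Theorem 6.2(b) of \cite{pisep} when $G/M$ is a $\pi'$-group (some constituent is) --- before concluding by transitivity of induction. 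You instead pass through the $\pi$-partial character theory: the bijection ${\rm B}_\pi\to{\rm I}_\pi$ given by restriction to $\pi$-elements, Clifford restriction for ${\rm B}_\pi$-characters, and the fact that every ${\rm I}_\pi$-constituent of $\varphi^G$ lies over $\varphi$ when $N$ is normal. Your route is non-inductive and makes the lemma transparent as the ${\rm B}_\pi$-shadow of the adjunction between induction and restriction, at the cost of invoking the heavier machinery of Theorem 9.3 of \cite{pisep} and the ${\rm I}_\pi$-Clifford theory; the paper's route uses only the two induction theorems for ${\rm B}_\pi$-characters and stays closer to the definitions. One caution: what you call ``Frobenius reciprocity'' for partial characters fails at the level of multiplicities (just as for Brauer characters); what is true, and all you actually use, is the qualitative Clifford-theoretic statement that the irreducible $\pi$-partial constituents of $\varphi^G$ are precisely the members of ${\rm I}_\pi(G)$ lying over $\varphi$, which is available in the sources you cite. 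With that citation made precise, your proof is complete.
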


\begin{proof}
We will prove this by induction on $|G:N|$.  If $N = G$, then take $\chi = \theta$, and the conclusion is trivial.  Thus, we assume that $N < G$.  Let $N \le M < G$ so that $M$ is a maximal normal subgroup of $G$.  By the inductive hypothesis, there is a character $\mu \in {\rm B}_\pi (M)$ so that $\mu$ is a constituent of $\theta^M$.  If $G/M$ is a $\pi$-group, then every irreducible constituent of $\mu^G$ lies in ${\rm B}_\pi (G)$ by Theorem 7.1 of \cite{pisep}, so we can take $\chi$ to be any irreducible constituent of $\mu^G$, and we see that $\chi$ will lie in ${\rm B}_\pi (G)$ and is a constituent of $\theta^G$.  Thus, we may assume that $G/M$ is not a $\pi$-group.  Since $G$ is $\pi$-separable, this implies that $G/M$ is a $\pi'$-group.  In this case, we apply Theorem 6.2 (b) of \cite{pisep} to see that $\mu^G$ has some irreducible constituent $\chi$ that lies in ${\rm B}_\pi (G)$.  Since $\chi$ will be a constituent of $\theta^G$, the result follows.
\end{proof}

We can now prove Theorem \ref{main}.

\begin{proof}[Proof of Theorem \ref{main}]
If $\lambda = 1_N$, then take $\chi = 1_G$, and the result holds.  Note that $\lambda$ has degree one and the restriction of $\lambda$ to any subgroup of $N$ has order dividing $p$, so $\lambda$ lies in ${\rm B}_p (N)$ (see Lemma 5.4 of \cite{pisep}).  We now apply Lemma \ref{key} to see that there exists $\chi \in {\rm B}_{p} (G)$ such that $\chi$ is an irreducible constituent of $\lambda^G$.  Finally, we apply Corollary 12.1 of \cite{pisep} to see that $\chi$ is $\QQ_p$-valued.
\end{proof}

We now obtain Theorem \ref{main d} as a corollary.

\begin{proof}[Proof of Theorem \ref{main d}]
We work by induction on $|G|$.  If $G$ is trivial, then the conclusion is trivial.  Thus, we may assume that $G > 1$.  Let $N$ be a minimal normal subgroup of $G$.  By the inductive hypothesis, the number of $p$-chief factors of $G/N$ is at most the number of nonprincipal $\QQ_p$-irreducible characters of $G/N$.  If $N$ is a $p'$-group, this will yield the conclusion.  Thus, we may assume that $N$ is a $p$-group.  In light of Theorem \ref{main}, we see that $G/N$ has at least one fewer nonprincipal $\QQ_p$-irreducible than $G$, and so the first conclusion will hold for $G$.  The second conclusion holds by summing over all the primes that divide $|G|$.
\end{proof}

We say that $g$ is a $p$-element of $G$ if $g \in G$ has $p$-power order.  If $G$ is a $p$-solvable group, then applying Theorem 9.3 of \cite{pisep}, the number of characters in ${\rm B}_p (G)$ equals the number of $p$-conjugacy classes in $G$.  This is key for the following theorem which includes Theorem \ref{main c}.

\begin{theorem} \label{odd}
Let $p$ be a prime and let $G$ be a $p$-solvable group.  Then the number of conjugacy classes of $p$-elements is less than or equal to the number of $\QQ_p$-valued irreducible characters.  Furthermore, if $|G|$ is odd, then equality holds.
\end{theorem}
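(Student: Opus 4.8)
The key facts available are: (1) by Theorem 9.3 of \cite{pisep}, $|{\rm B}_p(G)| = $ number of $p$-conjugacy classes; (2) by Corollary 12.1 of \cite{pisep}, every character in ${\rm B}_p(G)$ is $\QQ_p$-valued; and (3) for odd-order groups, ${\rm B}_p$-characters have nice uniqueness properties. The inequality part of Theorem \ref{odd} is immediate: the map ${\rm B}_p(G) \hookrightarrow \{\QQ_p\text{-valued irreducible characters}\}$ is an injection (trivially, since it's an inclusion of sets), and its source has size equal to the number of $p$-conjugacy classes by (1). So the real content is the equality for $|G|$ odd, which amounts to showing that when $|G|$ is odd, \emph{every} $\QQ_p$-valued irreducible character of $G$ lies in ${\rm B}_p(G)$.

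**The main argument for the odd case.** I would argue by induction on $|G|$. The base case is trivial. For the inductive step, let $\chi \in \irr G$ be $\QQ_p$-valued; I want to show $\chi \in {\rm B}_p(G)$. Pick a minimal normal subgroup $N$ of $G$; since $G$ is solvable (odd order), $N$ is an elementary abelian $q$-group for some prime $q$. Split into cases. If $N$ is a $p'$-group, I would like to pass to $G/N$ when $N \le \ker\chi$, using the fact that ${\rm B}_p$-characters of $G/N$ inflate to ${\rm B}_p$-characters of $G$ (and conversely) — this is the standard behavior of ${\rm B}_\pi$ under quotients by $\pi'$-normal subgroups. When $N \not\le \ker\chi$, I would take a constituent $\theta$ of $\chi_N$; since $N$ is a $p'$-group and $\chi$ is $\QQ_p$-valued, one shows (using Clifford theory and Galois action, noting that $\QQ_p$ contains no nontrivial $q$-power roots of unity) that the relevant linear characters and stabilizers behave well, and reduce to the stabilizer $T = \stab G\theta$. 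If instead $N$ is a $p$-group, I would use Theorem \ref{main} together with the Galois-theoretic structure of how $\QQ_p$-valued characters sit over $N$.

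**Where the odd hypothesis really bites.** The crucial use of $|G|$ being odd is the theorem of Isaacs that for odd-order groups, each ordinary irreducible character lies over a unique ${\rm B}_p$-character in an appropriate sense, or more precisely that the set of $\QQ_p$-valued (equivalently, here, "real-ish with respect to $p$") characters is exactly ${\rm B}_p(G)$; this fails in even order because of $2$-power roots of unity and genuinely non-${\rm B}_p$ rational characters. Concretely, I expect to invoke a uniqueness/lifting statement from \cite{pisep} (in the vicinity of the results on $\pi$-special and ${\rm B}_\pi$ characters for odd order, cf. Isaacs) saying that restriction/induction sets up a bijection between ${\rm B}_p(G)$ and $\QQ_p$-valued $\irr G$ when $|G|$ is odd. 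The inductive bookkeeping — matching $\QQ_p$-valued characters of $G$ with those of quotients and of stabilizer subgroups $T < G$, and checking the field-of-values condition is preserved under these correspondences — is the part requiring care.

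**Anticipated main obstacle.** The hard part will be the case $N \not\le \ker\chi$ with $N$ a $p'$-group: I must show the inertia group $T$ is proper (or handle $T = G$ via character triple isomorphisms), verify that the Clifford correspondent of $\chi$ over $\theta$ is again $\QQ_p$-valued as a character of $T$, apply induction to get it in ${\rm B}_p(T)$, and then push back up to conclude $\chi \in {\rm B}_p(G)$ — all while keeping track of the fact that ${\rm B}_p$ behaves correctly under the Clifford correspondence for $\pi$-separable groups (this uses results like Theorems 6.2 and 7.1 of \cite{pisep} invoked above, plus the odd-order refinements). Ensuring the field-of-values hypothesis transfers cleanly through the character-triple isomorphism is the technical crux, since isomorphisms of character triples need not preserve fields of values in general, and one must use the specific odd-order structure to control this.
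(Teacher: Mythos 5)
Your proof of the inequality is complete and coincides with the paper's: ${\rm B}_p(G)$ consists of $\QQ_p$-valued irreducible characters by Corollary 12.1 of \cite{pisep}, and its cardinality equals the number of conjugacy classes of $p$-elements by Theorem 9.3 of \cite{pisep}. The gap is in the equality statement for $|G|$ odd, which, as you correctly identify, amounts to showing that every $\QQ_p$-valued $\chi \in \irr G$ lies in ${\rm B}_p(G)$. What you offer here is a plan rather than a proof: the inductive Clifford-theoretic reduction is never carried out, the case analysis over a minimal normal $p'$- versus $p$-subgroup is left at the level of ``one shows'' and ``I would like to,'' and you yourself flag the decisive step --- that both the field-of-values condition and membership in ${\rm B}_p$ transfer correctly through the Clifford correspondence and the attendant character triple isomorphisms --- as an unresolved ``technical crux.'' Since character triple isomorphisms do not in general preserve fields of values, this is not a routine verification; it is essentially the entire content of the odd-order assertion, and the ``uniqueness/lifting statement'' you say you expect to invoke is not in \cite{pisep}.

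The paper avoids all of this by quoting a result from a different paper of Isaacs, \cite{odd}. Let $E$ be obtained from $\QQ$ by adjoining a primitive $|G|$th root of unity, and let $\tau$ be the automorphism of $E$ that fixes $p$-power roots of unity and acts as complex conjugation on $p'$-roots of unity (page 551 of \cite{odd}). Lemma 3.1 of \cite{odd} states that for $|G|$ odd, $\chi \in {\rm B}_p(G)$ if and only if $\chi^\tau = \chi$. A $\QQ_p$-valued character has all of its values in $E \cap \QQ_p$, which is generated by $p$-power roots of unity and hence fixed pointwise by $\tau$; therefore every $\QQ_p$-valued irreducible character is $\tau$-invariant and so lies in ${\rm B}_p(G)$, giving the reverse containment and the equality. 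This is precisely where oddness enters, via the existence and the characterization property of $\tau$. The cleanest repair of your argument is to cite Lemma 3.1 of \cite{odd} directly rather than attempt to reprove it; your sketch is in effect an outline of how such a lemma might be approached, but as written it does not constitute a proof of the equality.
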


\begin{proof}
We saw that Corollary 12.1 of \cite{pisep} shows that every character in ${\rm B}_p (G)$ is $Q_p$-valued.  Since the number of $p$-conjugacy classes equals the number of characters in ${\rm B}_p (G)$, this implies that the number of $p$-conjugacy classes is less than or equal to the number of $Q_p$ valued irreducible characters of $G$.  Suppose now that $|G|$ is odd.  Let $E$ be the field obtained by adjoining a primitive $|G|$th root of unity to $Q$.  Following page 551 of \cite{odd}, $E$ has an automorphism $\tau$ that fixes the $p$th power roots of unity and acts like complex conjugation on the $p'$ roots of unity.  Notice that the fixed field for $\tau$ will be $E \cap Q_p$.  Applying Lemma 3.1 of \cite{odd}, we see that if $\chi \in \irr G$, then $\chi \in {\rm B}_p (G)$ if and only if $\chi^\tau = \chi$.  Thus, an irreducible character of $G$ lies in ${\rm B}_p (G)$ if and only if $\tau$ fixes all the values of $\chi$.  Since $E \cap Q_p$ is the fixed field for $\tau$, we conclude that $\chi$ lies in ${\rm B}_p (G)$ if and only if $\chi$ is $Q_p$-valued.  Thus, the number of $p$-conjugacy classes will equal the number of $Q_p$-valued irreducible characters of $G$.
\end{proof}

We note that one cannot bound the order of a Sylow $p$-subgroup of $G$ in terms of a function on the number of $\QQ_p$-valued irreducible characters of $G$.  Fix a prime $p$ and let $n$ be any positive integer.  Let $F$ be the field of order $p^n$ and let $E$ be the additive group of $F$ and let $C$ be the multiplicative group of $F$ so that $|E| = p^n$ and $C = p^n-1$.  It is easy to see that multiplication in $F$ defines a group action of $C$ on $E$, and we let $G$ be the semi-direct resulting from $C$ acting on $E$.  It is not difficult to see that $G$ has a unique faithful irreducible character and that it will be $\QQ_p$-valued.  If $p = 2$, then it is not difficult to see that $1_G$ is the only $\QQ_2$-valued character in $\irr {G/E}$, and so, $G$ has exactly two $\QQ_2$-valued irreducible characters.  On the other hand, if $p$ is odd, then one can see that $\irr {G/E}$ has exactly two $\QQ_p$-valued characters, and so, $\irr G$ will have exactly three irreducible characters that are $\QQ_p$-valued.  Since $n$ is arbitrary, this gives the desired conclusion.

\section{$\QQ_{pp}$-valued irreducible characters.}

This next lemma is also known.

\begin{lemma} \label{empty}
Let $\pi$ and $\rho$ be sets of primes, and let $G$ be a group that is both $\pi$-separable and $\rho$-separable.  If $\pi$ and $\rho$ are disjoint, then ${\rm B}_\pi (G) \cap {\rm B}_\rho (G) = \{ 1_G \}$.
\end{lemma}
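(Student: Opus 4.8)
The statement is that if $\pi$ and $\rho$ are disjoint sets of primes and $G$ is both $\pi$-separable and $\rho$-separable, then the only character lying in both $\Bpi{G}$ and $\Bpi[\rho]{G}$ is the trivial character $1_G$. My plan is to show that any $\chi \in \Bpi{G} \cap \Bpi[\rho]{G}$ must have degree $1$ and order dividing a number coprime to itself, forcing $\chi = 1_G$. The two main facts I would invoke about ${\rm B}_\pi$-characters are: first, the degree of a $\Bpi{G}$-character is a $\pi$-number (this is one of the foundational properties established in \cite{pisep}, following from the fact that $\Bpi{G} \subseteq \Ipi{G}$ and that $\pi$-partial characters have $\pi'$-degree being controlled appropriately — more precisely, $\chi(1)$ divides $|G|$ and the relevant liftings force $\chi(1)$ to be a $\pi$-number); second, the analogous statement with $\rho$ in place of $\pi$. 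Combining these, if $\chi \in \Bpi{G} \cap \Bpi[\rho]{G}$, then $\chi(1)$ is simultaneously a $\pi$-number and a $\rho$-number, and since $\pi \cap \rho = \emptyset$ this forces $\chi(1) = 1$, so $\chi$ is linear.

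Next I would pin down that a linear character in $\Bpi{G}$ must be a $\pi$-character in the sense that its order (as an element of the group $\irr{G/G'}$ of linear characters) is a $\pi$-number. For linear characters this should follow from the description of $\Bpi{}$-characters of abelian (or abelian-by-stuff) groups: by Lemma 5.4 of \cite{pisep} and the machinery that shows $\Bpi{}$-characters restrict correctly, a linear character $\lambda$ lies in $\Bpi{G}$ precisely when $\lambda$ has $\pi$-order. (Alternatively, one can cite that $\Bpi{G}$-characters are exactly the irreducible constituents of $\Bpi{}$-characters of normal subgroups built up through the $\pi$-separable structure, and a linear $\Bpi{}$-character of a $\pi$-separable group is forced by Fong--Swan type reasoning plus Corollary 12.1 of \cite{pisep} to have $\pi$-power-root-of-unity values, hence $\pi$-order.) Applying the same reasoning with $\rho$, a linear $\chi \in \Bpi[\rho]{G}$ has $\rho$-order. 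Therefore $\chi$ has order dividing both a $\pi$-number and a $\rho$-number, and disjointness of $\pi$ and $\rho$ gives that $\chi$ has order $1$, i.e. $\chi = 1_G$. Conversely $1_G$ visibly lies in $\Bpi{G} \cap \Bpi[\rho]{G}$, so the intersection is exactly $\{1_G\}$.

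The step I expect to be the main obstacle is the second one: cleanly justifying that a \emph{linear} character in $\Bpi{G}$ has $\pi$-order. The degree argument is immediate from standard $\Bpi{}$-theory, but the order statement for linear characters requires either a direct appeal to a specific result in \cite{pisep} about $\Bpi{}$-characters lying over the trivial character of a Hall $\pi'$-subgroup's normal closure, or a short argument using that $\chi$ restricted to a Hall $\pi'$-subgroup is trivial (which is essentially what distinguishes $\Bpi{}$-characters among all linear characters of $\pi$-separable groups). I would phrase this by noting that since $G$ is $\pi$-separable it has a normal series with $\pi$- and $\pi'$-factors, and a $\Bpi{}$-character is trivial on the relevant $\pi'$-sections by construction; a linear character trivial on all $\pi'$-sections of $G/G'$ has $\pi$-order. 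If the referee wants more, this can be spelled out, but at the level of this paper it is legitimate to cite it as one of the "fairly well-known" facts about $\Bpi{}$-characters, in the same spirit as Lemma \ref{key}.
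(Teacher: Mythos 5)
Your proposal has a fatal gap at its very first step: it is not true that the degree of a ${\rm B}_\pi(G)$-character is a $\pi$-number. A ${\rm B}_\pi$-character is induced from a $\pi$-special character $\gamma$ of a nucleus $W \le G$; while $\gamma(1)$ is indeed a $\pi$-number, the index $|G:W|$ need not be, so $\chi(1) = |G:W|\,\gamma(1)$ can be divisible by primes outside $\pi$. Concretely, take $G = S_3$ and $\pi = \{3\}$: the degree-two irreducible character is induced from a nontrivial (hence $\{3\}$-special) linear character of $A_3$ and lies in ${\rm B}_3(S_3)$, yet its degree is $2$. The paper's own example at the end of Section~2 is another witness: for $G = E \rtimes C$ with $|E| = p^n$ and $|C| = p^n - 1$, the faithful irreducible character lies in ${\rm B}_p(G)$ and has degree $p^n - 1$, coprime to $p$. (The claim you are half-remembering is Fong's theorem, which controls degrees only in the special case $\pi = \{p\}'$.) Since the reduction to linear characters fails, the rest of the argument never gets started. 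Your second step --- that a \emph{linear} character lies in ${\rm B}_\pi(G)$ exactly when its order is a $\pi$-number --- is correct (a linear character is its own nucleus, so it is a ${\rm B}_\pi$-character iff it is $\pi$-special iff its order is a $\pi$-number), but you cannot reach it. Note also that combining the field-of-values statement (Corollary 12.1 of \cite{pisep}) for $\pi$ and for $\rho$ only yields that $\chi$ is rational-valued, which is far from forcing $\chi = 1_G$.

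The paper's proof avoids degrees entirely and inducts on $|G|$: for $\chi \in {\rm B}_\pi(G) \cap {\rm B}_\rho(G)$ and $N$ a maximal normal subgroup, Corollary 7.5 of \cite{pisep} places every irreducible constituent of $\chi_N$ in ${\rm B}_\pi(N) \cap {\rm B}_\rho(N)$, so by induction that constituent is $1_N$ and $\chi$ is a character of the simple quotient $G/N$, which by separability is a $\pi$-group or a $\pi'$-group; Corollary 5.3 of \cite{pisep} (the only ${\rm B}_\sigma$-character of a $\sigma'$-group is the principal one) then forces $\chi = 1_G$. Any repair of your approach would have to replace the degree argument with a restriction-to-normal-subgroups argument of this kind, at which point you have reproduced the paper's proof.
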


\begin{proof}
We work by induction on $|G|$.  If $G = 1$, then the result is trivial.  Thus, we may assume that $G > 1$. Hence, we can find a maximal normal subgroup $N$ in $G$.  Consider a character $\chi \in {\rm B}_\pi (G) \cap {\rm B}_\rho (G)$.  Let $\theta$ be an irreducible constituent of $\chi_N$.    Applying Corollary 7.5 of \cite{pisep}, we know that $\theta$ lies in both ${\rm B}_{\pi} (G)$ and $B_{\rho} (G)$.  By the inductive hypothesis, it follows that $\theta = 1_N$.  Hence, $\chi \in \irr {G/N}$.  Since $G$ is a $\pi$-separable group, we know that $G/N$ is either a $\pi$-group or a $\pi'$-groups.  If $G/N$ is a $\pi$-group, then $G/N$ is a $\rho'$-group since $\pi \cap \rho$ is empty.  It follows that $1_G$ is the unique character in ${\rm B}_{\rho} (G/N)$ (see Corollary 5.3 of \cite{pisep}) and hence, $\chi = 1_G$.  Otherwise, $G/N$ is a $\pi'$-group; so $1_G$ is the unique character in ${\rm B}_{\pi} (G)$ (again Corollary 5.3 of \cite{pisep}), and again, $\chi = 1_G$.
\end{proof}

This next theorem includes both Theorem \ref{main b} and Theorem \ref{main odd}.

\begin{theorem} \label{main2}
If $G$ is a solvable group, then the number of prime power conjugacy classes in $G$ is less than or equal to the number of $\QQ_{pp}$-valued irreducible characters of $G$.  Furthermore, if $|G|$ is odd, then this is an equality.
\end{theorem}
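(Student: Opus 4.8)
The plan is to deduce this from Theorem \ref{odd} by treating one prime at a time, keeping careful track of how the identity element and the trivial character are shared among all primes. Write $\pi(G)$ for the set of primes dividing $|G|$, and recall that a solvable group is $p$-solvable for every prime $p$; we may assume $G > 1$, since the statement is trivial otherwise. Every element of prime power order of $G$ is either the identity or a $q$-element for a unique $q \in \pi(G)$, and for distinct primes the associated sets of nontrivial classes are disjoint. Writing $c_q$ for the number of conjugacy classes of $q$-elements of $G$, the number of prime power conjugacy classes of $G$ is therefore $1 + \sum_{q \in \pi(G)}(c_q - 1)$, and by Theorem 9.3 of \cite{pisep} we have $c_q = |{\rm B}_q(G)|$, so this number equals
\[
1 + \sum_{q \in \pi(G)} \bigl( |{\rm B}_q(G)| - 1 \bigr).
\]

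Next I would match this count on the character side. By Corollary 12.1 of \cite{pisep} every character in ${\rm B}_q(G)$ is $\QQ_q$-valued, hence $\QQ_{pp}$-valued, so $\bigcup_{q \in \pi(G)} {\rm B}_q(G)$ is a set of $\QQ_{pp}$-valued irreducible characters. Since $G$ is $q$-separable for every $q$, Lemma \ref{empty} gives ${\rm B}_q(G) \cap {\rm B}_r(G) = \{1_G\}$ for distinct $q, r \in \pi(G)$, so the sets ${\rm B}_q(G) \setminus \{1_G\}$ are pairwise disjoint and
\[
\Bigl| \bigcup_{q \in \pi(G)} {\rm B}_q(G) \Bigr| = 1 + \sum_{q \in \pi(G)} \bigl( |{\rm B}_q(G)| - 1 \bigr).
\]
Comparing the two displays shows that the number of prime power conjugacy classes of $G$ equals $\bigl| \bigcup_{q \in \pi(G)} {\rm B}_q(G) \bigr|$, which is at most the number of $\QQ_{pp}$-valued irreducible characters of $G$; this is the asserted inequality.

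To get equality when $|G|$ is odd, I would prove the reverse inclusion: every $\QQ_{pp}$-valued irreducible character $\chi$ lies in $\bigcup_{q \in \pi(G)} {\rm B}_q(G)$. By definition $\chi$ is $\QQ_p$-valued for some prime $p$. If $p \notin \pi(G)$, then, since the values of $\chi$ lie in $\QQ(\zeta)$ for a primitive $|G|$th root of unity $\zeta$ and $\QQ_p \cap \QQ(\zeta) = \QQ$ (as $p \nmid |G|$), the character $\chi$ is rational-valued, hence $\QQ_q$-valued for every $q \in \pi(G)$; so we may assume $p \in \pi(G)$. Then $G$ is a $p$-solvable group of odd order, and the argument in the proof of Theorem \ref{odd} shows that $\chi \in {\rm B}_p(G) \subseteq \bigcup_{q \in \pi(G)} {\rm B}_q(G)$. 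Hence $\bigcup_{q \in \pi(G)} {\rm B}_q(G)$ is precisely the set of $\QQ_{pp}$-valued irreducible characters of $G$, and the two counts agree.

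The only genuine difficulty is the bookkeeping: the identity element and the trivial character are each counted at every prime, so the overcounting produced by summing $|{\rm B}_q(G)|$ over $q \in \pi(G)$ must cancel exactly --- on the class side by the disjointness of the sets of nontrivial $q$-element classes, and on the character side by Lemma \ref{empty}. The minor subtlety that a $\QQ_{pp}$-valued character might a priori be $\QQ_p$-valued only for a prime $p$ not dividing $|G|$ is dealt with by noting that such a character is necessarily rational-valued.
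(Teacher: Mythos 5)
Your proof is correct and follows essentially the same route as the paper: count prime power classes via $|{\rm B}_q(G)|$ using Theorem 9.3 of \cite{pisep}, use Lemma \ref{empty} to see the sets ${\rm B}_q(G)$ meet only in $1_G$, use Corollary 12.1 of \cite{pisep} to embed their union in the $\QQ_{pp}$-valued characters, and invoke the odd-order characterization from Theorem \ref{odd} for equality. Your explicit handling of the edge case where a $\QQ_{pp}$-valued character is $\QQ_p$-valued only for a prime $p \nmid |G|$ (forcing it to be rational-valued) is a small point the paper's proof passes over silently, but it does not change the argument.
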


\begin{proof}
We know from that for each prime $p$ the number of $p$-conjugacy classes equals the number of ${\rm B}_p (G)$ characters.  Thus, the number of nonidentity $p$-conjugacy classes equals the number of nonprincipal ${\rm B}_p (G)$ characters.  It is not difficult to see that the nonidentity prime power classes is a disjoint union of the nonidentity $p$-classes for the various primes $p$ that divide $|G|$.  In light of Lemma \ref{empty}, the sets of nonprincipal ${\rm B}_p (G)$ characters are disjoint for the various primes $p$.  We conclude that number of prime power classes equals the size of $\cup_p {\rm B}_p (G)$.  Since all the ${\rm B}_p (G)$ characters are $\QQ_{pp}$-valued for all the primes $p$, we obtain the first conclusion.  Finally, when $|G|$ is odd we have seen that every $\QQ_{pp}$-valued irreducible character of $G$ lies in ${\rm B}_p (G)$ for some prime $p$, and this yields the second conclusion.
\end{proof}

We now present some examples to see that the number of $\QQ_{pp}$-irreducible characters need not equal the number of prime power conjugacy classes.  Recall that a group is called {\it rational} if all the irreducible characters are rational valued.  Thus, if $G$ is a rational group, then all of the irreducible characters will $\QQ_{pp}$-valued, so it suffices to find rational groups that have elements whose orders are not prime powers.  It is not difficult to see that any Symmetric group of degree at least $5$ will fit the bill.  Since we are discussing solvable groups in this note, we feel obligated to present some solvable examples.  Infinitely many solvable group examples will exist since any $\{ 2, 3 \}$-group can be embedded in a rational $\{ 2, 3 \}$-group (see Proposition 1 of \cite{gow}), and of course, every $\{ 2, 3 \}$-group is solvable by Burnside's theorem.

We can now obtain Theorem \ref{main a} as a application of the H\'ethelyi - K\"ulshammer theorem

\begin{proof}[Proof of Theorem \ref{main a}]
In light of Theorem \ref{main2}, we know that $G$ has at most $k$ prime power conjugacy classes.  The H\'ethelyi - K\"ulshammer theorem, Theorem 1.1 of \cite{HeKu}, implies that there are finitely many groups with at most $k$ prime power conjugacy classes.  We conclude that there are finitely many solvable groups with $k$ $\QQ_{pp}$-valued irreducible characters.
\end{proof}

Note that Theorem \ref{main a} is equivalent to saying that if $G$ is a solvable group, then $|G|$ can be bounded in terms of a function in the number of $\QQ_{pp}$-valued irreducible characters of $G$.  Since the number of irreducible characters of $G$ is less than or equal to $|G|$, this implies that $|\irr G|$ can be bounded by a function in terms of the number of $\QQ_{pp}$ irreducible characters.  Also, note that whenever we used $\QQ_p$ or $\QQ_{pp}$ valued irreducible characters, we really were using ${\rm B}_p (G)$ or $\cup_{p \in \pi (G)} {\rm B}_p (G)$ where $\pi (G)$ is the set of primes that divide $|G|$.    Thus, in Theorems \ref{main a}, \ref{main b}, \ref{main c}, and \ref{main d}, we could replace $\QQ_p$ valued irreducible characters by ${\rm B}_p (G)$ and $\QQ_{pp}$ valued irreducible characters by $\cup_{p \in \pi(G)} {\rm B}_p (G)$, and then view all of the results in this paper as results about the ${\rm B}_\pi$ characters of $G$.

\end{document}